\theoremstyle{plain}
\newtheorem{theorem}{Theorem}[section]
\newtheorem{lemma}[theorem]{Lemma}
\newtheorem{corollary}[theorem]{Corollary}
\newtheorem{proposition}[theorem]{Proposition}
\theoremstyle{remark}
\newtheorem*{remark}{Remark}
\newcommand{\CC}{{\mathbb C}}
\newcommand{\RR}{{\mathbb R}}
\newcommand{\quarter}{\textstyle\frac{1}{4}}
\DeclareMathOperator{\inter}{int}
\journalname{Minda Conference Proceedings}
\begin{document}

\title{Cross-sections of multibrot sets
\thanks{TR supported by grants from NSERC and the Canada research chairs program}}

\author{Line Baribeau  \and Thomas Ransford}

\institute{Line Baribeau \at
              D\'epartement de math\'ematiques et de statistique, Universit\'e Laval,\\
              1045 avenue de la M\'edecine, Qu\'ebec (QC), Canada G1V 0A6\\
              \email{line.baribeau@mat.ulaval.ca}           
           \and
           Thomas Ransford \at
              D\'epartement de math\'ematiques et de statistique, Universit\'e Laval,\\
              1045 avenue de la M\'edecine, Qu\'ebec (QC), Canada G1V 0A6\\ 
              Tel.: +14186562131 ext 2738\\
              Fax: +14186565902\\
              \email{thomas.ransford@mat.ulaval.ca}           
}

\date{Received: date / Accepted: date}

\dedication{Dedicated to David Minda on the occasion of his retirement}

\maketitle

\begin{abstract}
We identify the intersection of the multibrot set of $z^d+c$ 
with the rays $\RR^+\omega$, where $\omega^{d-1}=\pm1$.
\keywords{Mandelbrot set \and Multibrot set}
\subclass{37F45}
\end{abstract}

\section{Introduction}

Let $d$ be an integer with $d\ge2$.
Given $c\in\CC$, we define
\[
p_c(z):=z^d+c
\quad\text{and}\quad
p_c^{[n]}:=p_c\circ\dots\circ p_c\quad\text{($n$ times)}.
\]
The corresponding generalized Mandelbrot set, 
or \emph{multibrot set}, is defined by
\[
M_d:=\Bigl\{c\in\CC: \sup_{n\ge0}|p_c^{[n]}(0)|<\infty\Bigr\}.
\]
Of course $M_2$ is just the classical Mandelbrot set. 
Computer-generated images of $M_3$ and $M_4$ are pictured in Figure~\ref{F:M3M4}.
Multibrot sets have been extensively studied in the literature.
Schleicher's article \cite{Sch04} 
contains a wealth of background material on them.

\begin{figure*}[htb]
\begin{minipage}{0.48\linewidth}
\centering
\includegraphics[scale=0.378, trim=0 7 0 0,clip=true]{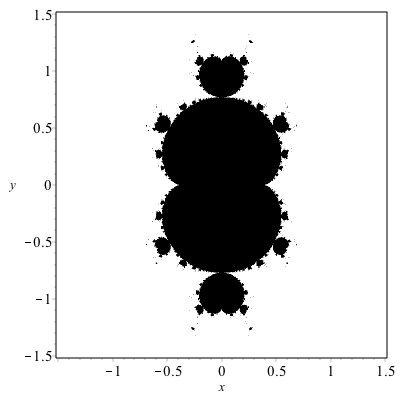}
\end{minipage}
\begin{minipage}{0.48\linewidth}
\centering
\includegraphics[scale=0.4]{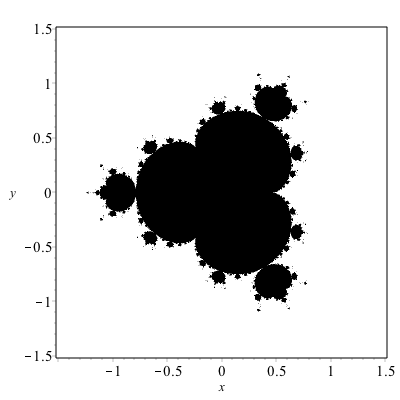}
\end{minipage}
\caption{The multibrot sets $M_3$ and $M_4$}\label{F:M3M4}
\end{figure*}

We mention here some elementary properties of multibrot sets.
First of all, they exhibit $(d-1)$-fold rotational invariance, namely   
\begin{equation}\label{E:rot}
M_d=\omega M_d \qquad(\omega\in\CC,~\omega^{d-1}=1).
\end{equation}
Indeed, for these $\omega$,  writing $\phi(z):=\omega z$, 
we have $\phi^{-1}\circ p_c\circ\phi=p_{c/\omega}$, 
so $p_c^{[n]}(0)$ remains bounded 
if and only if $p_{c/\omega}^{[n]}(0)$ does.
(In fact, the rotations in \eqref{E:rot} are the only rotational symmetries of $M_d$. 
The paper of Lau and Schleicher \cite{LS96} contains an elementary proof of this fact.)

Also, writing $\overline{D}(0,r)$ for 
the closed disk with center $0$ and radius $r$, 
we have the inclusions
\[
\overline{D}(0,\alpha(d))\subset M_d\subset \overline{D}(0,\beta(d)),
\]
where 
\[
\alpha(d):=(d-1)d^{-d/(d-1)}
\quad\text{and}\quad
\beta(d):=2^{1/(d-1)}.
\]
The first inclusion follows from the fact that, if $|c|\le\alpha(d)$, 
then the closed disk $\overline{D}(0,d^{-1/(d-1)})$ is mapped into itself by $p_c$, 
and consequently the sequence $p_c^{[n]}(0)$ is bounded. 
For the second inclusion, we observe that, 
if $|c|>\beta(d)$, then by induction
$|p_c^{[n+2]}(0)|\ge (2d)^n(|c|^d-2|c|)$ for all $n\ge0$,
and the right-hand side of this inequality tends to infinity with $n$.

When $d$ is odd, we have
\begin{equation}\label{E:odd}
M_d\cap\RR=[-\alpha(d),\alpha(d)].
\end{equation}
This equality was conjectured by Paris\'e and Rochon in \cite{PR15a},
and proved by them in \cite{PR15b}. 
Also, when $d$ is even, we have
\begin{equation}\label{E:even}
M_d\cap\RR=[-\beta(d),\alpha(d)].
\end{equation}
This equality was also conjectured in \cite{PR15a}, 
and subsequently proved in \cite{PRR16}.
When $d=2$, it reduces to the well-known equality 
$M_2\cap\RR=[-2,\quarter]$.

By virtue of the rotation-invariance property \eqref{E:rot}, 
the equalities \eqref{E:odd} and \eqref{E:even} yield information about 
the intersection of $M_d$ with certain rays emanating from zero. 
Indeed, if $\omega^{d-1}=1$, then 
\[
M_d\cap\RR^+\omega=\{t\omega:0\le t\le \alpha(d)\},
\]
and if $\omega^{d-1}=-1$ and $d$ is even, then 
\[
M_d\cap\RR^+\omega=\{t\omega:0\le t\le \beta(d)\}.
\]
This leaves open the case when $\omega^{d-1}=-1$ and $d$ is odd.
The purpose of this note is to fill the gap.
The following theorem is our main result.

\begin{theorem}\label{T:gamma}
If $\omega^{d-1}=-1$ and $d$ is odd, then 
\[
M_d\cap\RR^+\omega=\{t\omega:0\le t\le \gamma(d)\},
\]
where
\begin{equation}\label{E:gamma}
\gamma(d):=d^{-d/(d-1)}\bigl(\sinh(d\xi_d)+d\sinh(\xi_d)\bigr),
\end{equation}
and
$\xi_d$ is the unique positive root of the equation
$\cosh(d\xi_d)=d\cosh(\xi_d)$.
\end{theorem}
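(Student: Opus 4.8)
The plan is to conjugate $p_c$ by the rotation $\phi(w):=\omega w$, reducing the problem to one-dimensional \emph{real} dynamics. Writing $c=t\omega$ with $t\ge0$, a direct computation in the spirit of the one behind \eqref{E:rot}, using $\omega^{d-1}=-1$, gives $\phi^{-1}\circ p_c\circ\phi=q$, where $q(w):=t-w^d$; hence $p_c^{[n]}(0)=\omega\,q^{[n]}(0)$, and since $q$ has real coefficients, $t\omega\in M_d$ if and only if the real sequence $\bigl(q^{[n]}(0)\bigr)_{n\ge0}$ is bounded. When $0\le t\le1$ this sequence stays in $[0,t]$ by an immediate induction ($0\le w\le t\le1$ implies $0\le w^d\le t$, so $q(w)\in[0,t]$), so such $t$ are unproblematic; all the work is in the range $t>1$.

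For $t>1$ I would exploit that, $d$ being odd, $q$ is strictly decreasing on $\RR$, so $q^{[2]}$ is strictly increasing; hence the even- and odd-indexed subsequences of the orbit of $0$ are monotone, the even one decreasing (since $q^{[2]}(0)=t-t^d<0$) and the odd one increasing (since $q^{[2]}(t)>t>0$). So the orbit is bounded precisely when both subsequences converge, i.e.\ when $q^{[2]}$ has a fixed point $\le t-t^d$ and one $\ge q^{[2]}(t)$; a short monotonicity argument shows both hold if and only if $q$ has a real $2$-cycle $\{-v,u\}$ with $u,v>0$ (given the two conditions, the subsequential limits form such a cycle; conversely such a cycle automatically satisfies $-v\le t-t^d$ and $u\ge q^{[2]}(t)$). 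The cycle relations $q(-v)=u$, $q(u)=-v$ read $u=t+v^d$ and $u^d=t+v$; eliminating $t$ gives $u^d-u=v-v^d$, and the sign pattern of $x\mapsto x^d-x$ forces $0<v<1<u$. Conversely each $v\in(0,1)$ yields a unique $u=u(v)>1$ with $u^d-u=v-v^d$ and a parameter $t=t(v):=u(v)-v^d=u(v)^d-v$. Therefore, for $t>1$, the orbit of $0$ is bounded if and only if $t$ lies in the range of $t(\cdot)$ on $(0,1)$.

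It remains to compute $\max_{v\in(0,1)}t(v)$. The map $t$ extends continuously to $[0,1]$ with $t(0)=1$, $t(1)=0$, hence attains a maximum on $[0,1]$; since $t(0)=1$ and $t'(0^+)=1/(d-1)>0$, that maximum exceeds $1$ and is attained at an interior critical point. Implicit differentiation of $u^d-u=v-v^d$ together with $t'(v)=u'(v)-dv^{d-1}=0$ gives, after simplification, $(uv)^{d-1}=d^{-2}$, i.e.\ $uv=d^{-2/(d-1)}$ (equivalently, the multiplier $(q^{[2]})'$ along the $2$-cycle equals $d^2(uv)^{d-1}=1$, so the extremal parameter is exactly where $q$ acquires a parabolic $2$-cycle — the analogue here of the cusp of $M_2$). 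Substituting $u=d^{-1/(d-1)}e^{\xi}$, $v=d^{-1/(d-1)}e^{-\xi}$ (which realizes $uv=d^{-2/(d-1)}$), the constraint $u^d-u=v-v^d$ collapses to $\cosh(d\xi)=d\cosh(\xi)$ — and $\cosh(d\xi)-d\cosh(\xi)$ is $1-d<0$ at $\xi=0$ with derivative $d(\sinh(d\xi)-\sinh\xi)>0$ on $(0,\infty)$, so this has a unique positive root $\xi_d$ — while the corresponding value $t=\tfrac12\bigl((u-v^d)+(u^d-v)\bigr)=d^{-1/(d-1)}\sinh\xi_d+d^{-d/(d-1)}\sinh(d\xi_d)$ simplifies to precisely $\gamma(d)$. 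Hence $\max_{v\in(0,1)}t(v)=\gamma(d)$ (so $\gamma(d)>1$) and the range of $t(\cdot)$ on $(0,1)$ is $(0,\gamma(d)]$; combining with the case $t\le1$, the orbit of $0$ is bounded if and only if $0\le t\le\gamma(d)$, which is the assertion.

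The step I expect to be the main obstacle is this last one — converting the extremality condition into the closed form in \eqref{E:gamma}. Obtaining the Lagrange-type relation $uv=d^{-2/(d-1)}$ is routine, but the key insight is that the substitution $u,v\mapsto d^{-1/(d-1)}e^{\pm\xi}$ \emph{simultaneously} linearizes the constraint (into the defining $\cosh$-equation for $\xi_d$) and the objective (into the $\sinh$-formula for $\gamma(d)$); one must also be careful to justify that the constrained maximum is attained at an interior critical point rather than at the endpoints $v\in\{0,1\}$, and that the equation $\cosh(d\xi)=d\cosh(\xi)$ has a unique positive solution.
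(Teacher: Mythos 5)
Your proof is correct and follows essentially the same route as the paper: conjugating to the real map $w\mapsto t-w^d$, using the monotone even/odd subsequences to characterize boundedness via a $2$-cycle $\{-v,u\}$ with $u^d+v^d=u+v$, and maximizing $t=u-v^d$ with the substitution $u,v=d^{-1/(d-1)}e^{\pm\xi}$. The only (cosmetic) differences are that you parametrize the constraint curve by $v$ and differentiate implicitly where the paper uses Lagrange multipliers on $f(a,b)=a-b^d$ subject to $a^d+b^d=a+b$, and you phrase the converse direction via fixed points of $q^{[2]}$ rather than via the invariant interval $[-v,u]$.
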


When $d=3$, one can use the relation 
$\cosh(3x)=4\cosh^3x-3\cosh x$ to derive the exact formula 
$\gamma(3)=\sqrt{32/27}$, which yields

\begin{corollary}
$M_3\cap i\RR=\{iy: |y|\le \sqrt{32/27}\}.$
\end{corollary}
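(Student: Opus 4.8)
The plan is to deduce the corollary from Theorem~\ref{T:gamma} in two stages: first reducing the statement about the whole imaginary axis to the positive-ray statement the theorem provides, and then evaluating $\gamma(3)$ explicitly.

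For the first stage, I observe that both $\omega=i$ and $\omega=-i$ satisfy $\omega^{2}=-1$, i.e.\ $\omega^{d-1}=-1$ with $d=3$ odd, so Theorem~\ref{T:gamma} applies to each. Writing the imaginary axis as $i\RR=\RR^{+}i\cup\RR^{+}(-i)$, I would apply the theorem once with $\omega=i$ and once with $\omega=-i$ to obtain
\[
M_3\cap\RR^{+}i=\{ti:0\le t\le\gamma(3)\},\qquad M_3\cap\RR^{+}(-i)=\{-ti:0\le t\le\gamma(3)\}.
\]
Taking the union yields $M_3\cap i\RR=\{iy:|y|\le\gamma(3)\}$, so it remains only to show that $\gamma(3)=\sqrt{32/27}$.

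For the second stage, I would first locate $\xi_3$. Substituting the identity $\cosh(3x)=4\cosh^3x-3\cosh x$ into the defining equation $\cosh(3\xi_3)=3\cosh(\xi_3)$ turns it into $4\cosh^3\xi_3=6\cosh\xi_3$; dividing by $\cosh\xi_3>0$ gives $\cosh^2\xi_3=3/2$, whence $\sinh^2\xi_3=1/2$. To evaluate the bracket in \eqref{E:gamma}, I would use the companion identity $\sinh(3x)=3\sinh x+4\sinh^3x$, which gives $\sinh(3\xi_3)+3\sinh\xi_3=6\sinh\xi_3+4\sinh^3\xi_3$. Inserting $\sinh\xi_3=1/\sqrt2$ reduces this to $4\sqrt2$, and therefore
\[
\gamma(3)=3^{-3/2}\cdot 4\sqrt2=\frac{4\sqrt2}{3\sqrt3}=\sqrt{\frac{32}{27}},
\]
as required.

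Since each step is either an invocation of Theorem~\ref{T:gamma} or an elementary manipulation of the hyperbolic triple-angle formulas, I do not anticipate a genuine obstacle. The only point demanding a little care is the passage from the single ray treated by the theorem to the entire axis, which is handled simply by noting that $-i$ is as admissible a choice of $\omega$ as $i$.
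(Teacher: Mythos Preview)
Your proposal is correct and follows essentially the same route as the paper, which simply remarks that the triple-angle identity $\cosh(3x)=4\cosh^3x-3\cosh x$ yields $\gamma(3)=\sqrt{32/27}$ and leaves the rest implicit. Your explicit handling of both rays via $\omega=\pm i$ and your use of the companion identity for $\sinh(3x)$ just fill in the details the paper omits.
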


In comparison, 
note that \eqref{E:odd} gives $M_3\cap\RR=\{x:|x|\le 2/\sqrt{27}\}$. 
See  Figure~\ref{F:M3M4}.

The first few values of $\alpha(d), \beta(d),\gamma(d)$ are tabulated in Table~\ref{Tb:abc} for comparison.

\begin{table}[htb]
\caption{Values of $\alpha(d),\beta(d),\gamma(d)$ for $2\le d\le 12$}
\label{Tb:abc}
\renewcommand{\arraystretch}{1.1}
\begin{tabular}{|r|c|c|c|}
\hline
$d$ &$\alpha(d)$ &$\beta(d)$  &$\gamma(d)$\\ 
\hline
$2$ &$0.250000000$ &$2.000000000$ &$1.100917369$\\
$3$ &$0.384900179$ &$1.414213562$ &$1.088662108$\\
$4$ &$0.472470394$ &$1.259921050$ &$1.078336651$\\
$5$ &$0.534992244$ &$1.189207115$ &$1.069984489$\\
$6$ &$0.582355932$ &$1.148698355$ &$1.063192242$\\
$7$ &$0.619731451$ &$1.122462048$ &$1.057591279$\\ 
$8$ &$0.650122502$ &$1.104089514$ &$1.052904317$\\
$9$ &$0.675409498$ &$1.090507733$ &$1.048928539$\\
$10$ &$0.696837314$ &$1.080059739$ &$1.045514971$\\ 
$11$ &$0.715266766$ &$1.071773463$ &$1.042552690$\\
$12$ &$0.731314279$ &$1.065041089$ &$1.039957793$\\
\hline
\end{tabular}
\renewcommand{\arraystretch}{1.0}
\end{table}

It can be shown that $\gamma(d)>1$ for all $d$, and that
\[
\gamma(d)=2^{1/d+O((\log d)^2/d^2)) }
\quad\text{as~}d\to\infty.
\]
These statements will be justified later.

\section{Proof of Theorem~\ref{T:gamma}}

In this section we suppose that $d$ is an odd integer with $d\ge3$. 
If $\omega^{d-1}=-1$, then, writing $\phi(z):=\omega z$, 
we have $\phi^{-1}\circ p_c\circ \phi=q_{c/\omega}$, where
\[
q_c(z):=-z^d+c.
\]
Thus $M_d\cap\RR^+\omega=\omega (N_d\cap \RR^+)$, where
\[
N_d:=\Bigl\{c\in\CC: \sup_{n\ge0}|q_c^{[n]}(0)|<\infty\Bigr\}.
\]
We now seek to identify $N_d\cap\RR^+$. 
We shall do this in two stages.

\begin{lemma}\label{T:N}
Let $d$ be an odd integer with $d\ge3$. Then
\[
N_d\cap\RR^+=[0,~\mu(d)],
\]
where
\[
\mu(d):=\max\bigl\{a-b^d:a,b\ge0,~a^d+b^d=a+b\bigr\}.
\]
\end{lemma}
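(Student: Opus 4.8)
The plan is to analyze the real dynamics of $q_c(z)=-z^d+c$ for $c\ge0$, exploiting the fact that $d$ is odd so that $q_c$ maps $\RR$ to $\RR$. First I would record the basic features of the orbit of $0$: we have $q_c(0)=c\ge0$, and then $q_c^{[2]}(0)=c-c^d$, so the orbit immediately lands in the interval between these two values. Since $d$ is odd, $q_c$ is a decreasing function on $\RR$, hence $q_c^{[2]}$ is increasing; this monotonicity is the structural feature that makes the problem tractable. The key observation is that the orbit of $0$ stays bounded precisely when the increasing map $q_c^{[2]}$ has a fixed interval (or fixed point) containing the orbit, and that the relevant escape happens through the dynamics of $q_c^{[2]}$ on $[c-c^d,c]$ (or, after sorting out signs, on some bounded real interval).

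Next I would set up the candidate extremal configuration. A natural guess is that the boundary case $c=\mu(d)$ corresponds to the existence of a $2$-cycle $\{a,-b\}$ of $q_c$ with $a,b\ge0$, i.e.\ $q_c(a)=-b$ and $q_c(-b)=a$. Writing these out gives $-a^d+c=-b$ and $b^d+c=a$, equivalently $c=a^d-b=a-b^d$. Eliminating $c$ yields the constraint $a^d+b^d=a+b$, and the value of the parameter is $c=a-b^d$, which is exactly the expression inside the $\max$ defining $\mu(d)$. So the plan is to show: (i) if $c>\mu(d)$ then no such real $2$-cycle of the right sign exists to "trap" the orbit, and in fact the orbit of $0$ escapes to $\pm\infty$; and (ii) if $0\le c\le\mu(d)$ then the orbit of $0$ is bounded, by exhibiting an explicit forward-invariant bounded interval (built from the saddle $2$-cycle or from the fixed point of $q_c^{[2]}$ corresponding to the maximizing $(a,b)$). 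Step (ii) should follow from a monotonicity/intermediate-value argument: the graph of the increasing function $q_c^{[2]}$ crosses the diagonal, producing a fixed point, and the orbit of $0$ is squeezed toward it (or toward an attracting interval bounded by such crossings).

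For the escape direction, the argument I would use is a convexity/derivative estimate: for $c$ large the map $q_c^{[2]}$ lies strictly above the diagonal on the whole range where the orbit lives, so the (increasing) subsequence $q_c^{[2n]}(0)$ is strictly increasing with no fixed point to stop it, forcing $q_c^{[2n]}(0)\to\infty$. The threshold between "graph meets the diagonal" and "graph stays above the diagonal" is exactly the tangency condition, which is where the constrained optimization problem defining $\mu(d)$ enters: the maximizing pair $(a,b)$ on the curve $a^d+b^d=a+b$ is the point of tangency, and $\mu(d)=a-b^d$ is the critical parameter value. I would make this rigorous by a Lagrange-multiplier computation on $\{a-b^d:a^d+b^d=a+b\}$ and by checking that at $c=\mu(d)$ the $2$-cycle is (neutrally) attracting/parabolic while for $c$ slightly larger it has disappeared.

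The main obstacle I anticipate is the escape argument for $c>\mu(d)$: one has to rule out that the orbit of $0$, even without a real $2$-cycle available, might still remain bounded by wandering in some complicated way, and one must handle the interplay of signs carefully since $q_c$ flips sign each iteration (so the orbit alternates between a "positive side" and a "negative side" and one really controls it via the even iterate $q_c^{[2]}$). Establishing cleanly that "no suitable fixed point of $q_c^{[2]}$ in the relevant interval" implies genuine escape to infinity — rather than, say, the orbit converging from outside or oscillating — will require a monotone-convergence argument together with the quantitative lower bound $|p_c^{[n+2]}(0)|\ge(2d)^n(|c|^d-2|c|)$-type estimate already used in the introduction for $|c|>\beta(d)$, restricted now to the relevant real interval. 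Everything else — the reduction $M_d\cap\RR^+\omega=\omega(N_d\cap\RR^+)$ is given, and the optimization bookkeeping is routine — should be comparatively mechanical.
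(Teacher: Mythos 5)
Your overall strategy --- exploit that $q_c$ is decreasing on $\RR$ (so $q_c^{[2]}$ is increasing), identify the trapping $2$-cycles $\{a,-b\}$ via $c=a-b^d=a^d-b$ with the constraint $a^d+b^d=a+b$, and control the orbit of $0$ through the monotone even/odd subsequences --- is exactly the paper's. But two points need repair. First, a sign slip: for $c>1$ one has $q_c^{[2]}(0)=c-c^d<0$, so the even subsequence $q_c^{[2n]}(0)$ is \emph{decreasing} (to $-b$ or to $-\infty$), not increasing, and $q_c^{[2]}$ sits \emph{below} the diagonal at $0$, not above it. More importantly, your anticipated ``main obstacle'' (the escape direction) is not an obstacle at all, and the quantitative $(2d)^n$-type estimate is unnecessary: since both subsequences are monotone, boundedness forces them to converge, their limits $a$ and $-b$ form a $2$-cycle by continuity, and hence $c=a-b^d\le\mu(d)$ by the very definition of $\mu(d)$. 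The contrapositive is the entire escape argument; no divergence rate is needed.

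Second, and more seriously, your step (ii) has a genuine gap. You propose to bound the orbit for every $c\in[0,\mu(d)]$ by an invariant interval ``built from the maximizing $(a,b)$.'' That fails: if $(a_0,b_0)$ is the maximizer, the constraint gives $a_0^d-b_0=a_0-b_0^d=\mu(d)$, so $q_c(a_0)=-a_0^d+c\ge -b_0$ if and only if $c\ge\mu(d)$; the interval $[-b_0,a_0]$ is $q_c$-invariant only at $c=\mu(d)$, not for smaller $c$. What is actually needed is that \emph{every} $c\in[1,\mu(d)]$ is of the form $a-b^d$ for some admissible $(a,b)$, each such $c$ then getting its own invariant interval $[-b,a]$; equivalently, one must show that $\{a-b^d:a,b\ge0,\ a^d+b^d=a+b\}$ is an interval containing $1$ (attained at $(a,b)=(1,0)$). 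The paper obtains this from the connectedness of the constraint curve $h(a)=-h(b)$ with $h(x)=x^d-x$. Your alternative route --- producing a fixed point of $q_c^{[2]}$ on $(-\infty,0]$ by the intermediate value theorem --- runs into the same difficulty, since $q_c^{[2]}(x)-x$ is negative both at $x=0$ and as $x\to-\infty$, so the IVT does not apply directly. Finally, the Lagrange-multiplier/tangency analysis you invoke belongs to the subsequent identification $\mu(d)=\gamma(d)$ and is not needed here, and the easy case $c\in[0,1]$ should be dispatched separately via $q_c([0,c])\subset[0,c]$.
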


\begin{proof}
Consider first the case $c\in[0,1]$.
In this case we have $q_c(0)=c$ and $q_c(c)=-c^d+c\ge0$.
Since $q_c$ is a decreasing function, 
it follows that $q_c([0,c])\subset[0,c]$, 
and in particular that $q_c^{[n]}(0)$ is bounded. 
Hence $c\in N_d$ for all $c\in[0,1]$.

Consider now the case $c\in[1,\infty)$. 
Then $q_c(0)=c$ and $q_c^{[2]}(0)=-c^d+c\le0$. 
As $q_c$ is a decreasing function,
it follows that $q_c^{[2n]}(0)$ is a decreasing sequence 
and $q_c^{[2n+1]}(0)$ is an increasing sequence. 
If, further, $c\in N_d$, then $q_c^{[n]}(0)$ is bounded, 
and both of these subsequences converge, 
say $q_c^{[2n+1]}(0)\to a$ and $q_c^{[2n]}(0)\to -b$, 
where $a,b\ge0$. 
We then have $q_c(-b)=a$ and $q_c(a)=-b$, 
in other words $b^d+c=a$ and $a^d-c=b$. 
Adding these equations gives $a^d+b^d=a+b$. 
Summarizing what we have proved: 
if $c\in N_d\cap[1,\infty)$, then $c=a-b^d$, 
where $a,b\ge0$ and $a^d+b^d=a+b$. 
Conversely, if $c$ is of this form, 
then $q_c(-b)=a$ and $q_c(a)=-b$, 
so $[-b,a]$ is a $q_c$-invariant interval containing $0$, 
which implies that $q^{[n]}(0)$ remains bounded, 
and hence  $c\in N_d$. 
Combining these remarks, 
we have shown that
\begin{equation}\label{E:interval}
N_d\cap[1,\infty)=\{a-b^d:a,b\ge0,~a^d+b^d=a+b\}\cap[1,\infty).
\end{equation}
The condition  that $a^d+b^d=a+b$ can be re-written as $h(a)=-h(b)$, 
where $h(x):=x^d-x$. 
Viewed this way, it is more or less clear that 
the right-hand side of \eqref{E:interval} 
is a closed interval containing~$1$, 
so $N_d\cap[1,\infty)=[1,\mu(d)]$, 
where $\mu(d)$ is as defined in the statement of the lemma. 

Finally, putting all of this together, we have shown that 
$N_d\cap\RR^+=[0,\mu(d)]$.
\end{proof}

Next we identify $\mu(d)$  more explicitly.

\begin{lemma} 
$\mu(d)=\gamma(d)$.
\end{lemma}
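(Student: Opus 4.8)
The plan is to turn this into a one-variable maximization. Write $K:=\{(a,b):a,b\ge0,\ a^d+b^d=a+b\}$, so that $\mu(d)=\max_K(a-b^d)$. First I would record that on $K$ we have $a^d=a+b-b^d$, whence the identity $a-b^d=a^d-b$, so that $2(a-b^d)=(a+a^d)-(b+b^d)$. Next, every $(a,b)\in K$ with $a,b>0$ is uniquely of the form $a=\rho e^{\xi}$, $b=\rho e^{-\xi}$ ($\rho>0$, $\xi\in\RR$), and then the constraint reads $\rho^{d-1}\cosh(d\xi)=\cosh\xi$. Substituting into the symmetric form and using $\sinh\xi\cosh(d\xi)+\cosh\xi\sinh(d\xi)=\sinh((d+1)\xi)$, the quantity $a-b^d$ collapses to the explicit function
\[
g(\xi):=\rho\sinh\xi+\rho^d\sinh(d\xi)=\frac{\sinh((d+1)\xi)}{\cosh(d\xi)}\Bigl(\frac{\cosh\xi}{\cosh(d\xi)}\Bigr)^{1/(d-1)}.
\]
Since $g$ is odd and positive on $(0,\infty)$, and the only points of $K$ not covered by this parametrization are $(0,0),(1,0),(0,1)$ (at which $a-b^d$ equals $0,1,-1$), we get $\mu(d)=\max\bigl(1,\ \sup_{\xi>0}g(\xi)\bigr)$.

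To locate the supremum I would put $t:=e^{-2\xi}$, which rewrites $g$ as the rational-power expression $(1-t^{d+1})(1+t)^{1/(d-1)}(1+t^d)^{-d/(d-1)}$. From this one reads off that $g(\xi)\to0$ as $\xi\to0^+$, that $g(\xi)\to1$ as $\xi\to+\infty$, and that the derivative at $t=0$ is $1/(d-1)>0$, so in fact $g(\xi)>1$ for all large $\xi$. Meanwhile, a Lagrange-multiplier computation (the constraint gradient does not vanish on $K$) shows that an interior critical point of $a-b^d$ on $K$ must satisfy $(ab)^{d-1}=d^{-2}$, equivalently $\rho=d^{-1/(d-1)}$, equivalently---by the constraint relation---$\cosh(d\xi)=d\cosh\xi$. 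Since $\xi\mapsto\cosh(d\xi)-d\cosh\xi$ is strictly increasing on $(0,\infty)$ from $1-d<0$ to $+\infty$, this has a unique positive root, namely the $\xi_d$ of the theorem. A smooth function on $(0,\infty)$ that starts at $0$, rises above its limiting value $1$ at $+\infty$, and has a single critical point must attain its maximum at that point; hence $\sup_{\xi>0}g=g(\xi_d)>1$, and so $\mu(d)=g(\xi_d)$.

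It remains to evaluate $g(\xi_d)$. There $\rho=d^{-1/(d-1)}=d\cdot d^{-d/(d-1)}$ and $\cosh(d\xi_d)=d\cosh\xi_d$, while the addition formula gives $\sinh((d+1)\xi_d)=\cosh\xi_d\bigl(\sinh(d\xi_d)+d\sinh\xi_d\bigr)$. Plugging these into the closed form above,
\[
\mu(d)=g(\xi_d)=\frac{d^{-1/(d-1)}\cosh\xi_d\bigl(\sinh(d\xi_d)+d\sinh\xi_d\bigr)}{d\cosh\xi_d}=d^{-d/(d-1)}\bigl(\sinh(d\xi_d)+d\sinh\xi_d\bigr)=\gamma(d),
\]
exactly the asserted value. (In particular $\gamma(d)=\mu(d)>1$, one of the facts deferred in the introduction.)

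The two steps I expect to demand the most care are the algebraic collapse of $a-b^d$ to the clean expression $g(\xi)$---organizing the cancellations, in particular the $\sinh((d+1)\xi)$ identity and the vanishing of $\cosh\xi_d$ at the last step---and the verification that $\xi_d$ is a genuine maximum of $g$ rather than an inflection point. For the latter, the economical route is the observation that $g$ overshoots its limiting value near $+\infty$ while starting from $0$, so its supremum is an interior maximum and is therefore taken at the unique critical point; this avoids any second-derivative estimate.
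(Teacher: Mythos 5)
Your proof is correct. The computational core coincides with the paper's: the same symmetrization $a-b^d=\frac{(a-b)+(a^d-b^d)}{2}$ valid on the constraint set, the same parametrization $a=\rho e^{\xi}$, $b=\rho e^{-\xi}$, and the same Lagrange-multiplier computation leading to $(ab)^{d-1}=d^{-2}$ and hence to $\cosh(d\xi)=d\cosh\xi$. Where you genuinely diverge is in certifying that this critical point is the global maximum. The paper stays in two variables: compactness of the constraint set gives existence of a maximizer, a case analysis disposes of the three boundary points (with a directional-derivative computation at $(1,0)$), and the interior critical equation is then observed to have exactly one positive root. You instead solve the constraint for $\rho$ in terms of $\xi$ and obtain the explicit one-variable function $\frac{\sinh((d+1)\xi)}{\cosh(d\xi)}\bigl(\cosh\xi/\cosh(d\xi)\bigr)^{1/(d-1)}$, whose end behaviour you read off from the substitution $t=e^{-2\xi}$: it tends to $0$ at $\xi=0^{+}$ and to $1$ from above at $\xi=+\infty$, so its supremum exceeds the boundary value $1$ and must be attained at the unique interior critical point. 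This buys a more self-contained globalization: your overshoot argument at $t=0$ plays exactly the role of the paper's directional derivative $1/\sqrt{1+(d-1)^2}>0$ at $(1,0)$ (since $\xi\to+\infty$ corresponds to $(a,b)\to(1,0)$ along the curve), and it also makes explicit why the positive root of $\cosh(d\xi)=d\cosh\xi$, rather than the negative one, yields the maximum --- a point the paper dispatches with ``necessarily''. The price is the extra bookkeeping needed to verify the closed form and its expansion at $t=0$; both routes are sound, and your final evaluation via $\sinh((d+1)\xi_d)=\cosh\xi_d\bigl(\sinh(d\xi_d)+d\sinh\xi_d\bigr)$ correctly recovers $\gamma(d)$.
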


\begin{proof}
We reformulate the maximization problem defining $\mu(d)$. Set 
\begin{align*}
S&:=\{(a,b)\in\RR^2: a,b\ge0\},\\
f(a,b)&:=a-b^d,\\
g(a,b)&:=a^d+b^d-a-b.
\end{align*}
We are seeking to maximize $f$ over $S\cap\{g=0\}$. 
The set $S\cap\{g=0\}$ is compact and $f$ is continuous, 
so the maximum is certainly attained, say at $(a_0,b_0)$. 
Notice also that $\nabla g\ne0$ at every point of 
$S\cap\{g=0\}$.
There are two cases to consider.

Case 1: $(a_0,b_0)\in\partial S$. 
The condition that $g(a_0,b_0)=0$ then implies that 
\[
(a_0,b_0)=(0,0),(0,1) \text{~or~} (1,0).
\]
The corresponding values of $f(a_0,b_0)$ are $0,-1,1$ respectively. 
Clearly we can eliminate the first two points from consideration. 
As for the third, we remark that 
the directional derivative of $f$ at $(1,0)$ along $\{g=0\}$ 
in the direction pointing into $S$ 
is equal to $1/\sqrt{1+(d-1)^2}$, which is strictly positive. 
So $(1,0)$ cannot be a maximum of $f$ either. 

Case 2: $(a_0,b_0)\in\inter(S)$.
In this case, by the standard Lagrange multiplier argument, 
we must have 
$\nabla f(a_0,b_0)=\lambda\nabla g(a_0,b_0)$ for some $\lambda\in\RR$. 
Writing this out explicitly, we get
\begin{align*}
1&=\lambda(da_0^{d-1}-1),\\
-db_0^{d-1}&=\lambda(db_0^{d-1}-1).
\end{align*}
Dividing the  second equation by the first 
and then simplifying, we obtain
\[
a_0b_0=d^{-2/(d-1)}.
\]
Thus $a_0=d^{-1/(d-1)}e^\xi$ and $b_0=d^{-1/(d-1)}e^{-\xi}$ 
for some $\xi\in\RR$. With this notation, 
the constraint $g(a_0,b_0)=0$ translates to $\cosh(d\xi)=d\cosh(\xi)$, 
and the value of $f$ at $(a_0,b_0)$ is
\[
f(a_0,b_0)=a_0-b_0^d
=\frac{a_0-b_0}{2}+\frac{a_0^d-b_0^d}{2}
=d^{-d/(d-1)}\bigl(d\sinh(\xi)+\sinh(d\xi)\bigr).
\]
There are precisely two roots of $\cosh(d\xi)=d\cosh(\xi)$,
one positive and one negative. 
Necessarily the positive root gives rise to the maximum value of $f$, 
thereby showing that $\mu(d)=\gamma(d)$.
\end{proof}

\begin{remark}
Clearly $f(1,0)=1$. 
The treatment of Case~1 above shows that 
$f$ does not attain its maximum over $S\cap\{g=0\}$ at $(1,0)$, 
and so $\mu(d)>1$. 
This shows that $\gamma(d)>1$, 
thereby justifying a statement made in the introduction.
\end{remark}

\begin{proof}[Proof of Theorem~\ref{T:gamma}]
Combining the various results already obtained in this section, 
we have
\[
M_d\cap\RR^+\omega=\omega(N_d\cap\RR^+)=\omega[0,\mu(d)]=\omega[0,\gamma(d)].
\]
This concludes the proof of Theorem~\ref{T:gamma}.
\end{proof}

\section{An asymptotic formula for $\gamma(d)$.}

Our aim is to justify the following statement made in the introduction. 

\begin{proposition}
If $\gamma$ is defined as in \eqref{E:gamma},
then
\begin{equation}\label{E:asymp}
\gamma(d)=2^{1/d+O((\log d)^2/d^2)} \quad\text{as~}d\to\infty.
\end{equation}
\end{proposition}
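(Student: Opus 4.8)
The plan is to pin down the positive root $\xi_d$ of $\cosh(d\xi)=d\cosh\xi$ accurately enough to feed into \eqref{E:gamma}. First I would observe that, since $\cosh$ grows, the root must be small: writing the equation as $\cosh(d\xi_d)/\cosh(\xi_d)=d$ and using $\cosh(d\xi_d)\le e^{d\xi_d}$ together with $\cosh(\xi_d)\ge1$, we get $e^{d\xi_d}\ge d$, i.e.\ $\xi_d\ge(\log d)/d$; conversely $\cosh(d\xi_d)\ge\tfrac12 e^{d\xi_d}$ and $\cosh\xi_d\le e^{\xi_d}\le e$ for $\xi_d$ bounded, so $e^{d\xi_d}\le 2ed$, giving $\xi_d\le(\log d+\log(2e))/d$. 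Hence
\[
\xi_d=\frac{\log d}{d}+O\!\Bigl(\frac1d\Bigr),
\]
and in particular $d\xi_d=\log d+O(1)$ while $\xi_d=O((\log d)/d)\to0$.

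Next I would substitute these estimates into \eqref{E:gamma}. Since $\xi_d\to0$ we have $\sinh(\xi_d)=\xi_d+O(\xi_d^3)=O((\log d)/d)$, so the term $d\sinh(\xi_d)=O(\log d)$. For the dominant term, $\sinh(d\xi_d)=\tfrac12 e^{d\xi_d}-\tfrac12 e^{-d\xi_d}=\tfrac12 e^{d\xi_d}(1+O(1/d^2))$. To get $e^{d\xi_d}$ precisely, I return to the defining equation: $\tfrac12 e^{d\xi_d}(1+e^{-2d\xi_d})=\cosh(d\xi_d)=d\cosh\xi_d=d(1+O((\log d)^2/d^2))$, whence $e^{d\xi_d}=2d\,(1+O((\log d)^2/d^2))$. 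Therefore
\[
\gamma(d)=d^{-d/(d-1)}\Bigl(d\,(1+O(\tfrac{(\log d)^2}{d^2}))+O(\log d)\Bigr)
=d^{-d/(d-1)}\cdot d\cdot\bigl(1+O(\tfrac{(\log d)^2}{d^2})\bigr)
=d^{-1/(d-1)}\bigl(1+O(\tfrac{(\log d)^2}{d^2})\bigr).
\]

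Finally I would convert the prefactor to the stated form by taking logarithms. We have $\log\gamma(d)=-\tfrac{\log d}{d-1}+O((\log d)^2/d^2)$, and $\tfrac{1}{d-1}=\tfrac1d+O(1/d^2)$, so $-\tfrac{\log d}{d-1}=-\tfrac{\log d}{d}+O((\log d)/d^2)$. Comparing with the target $2^{1/d}$, whose logarithm is $(\log2)/d$, I would note this does \emph{not} match unless I have mis-signed something — so the last careful step is to recheck: in fact $\gamma(d)=d^{-1/(d-1)}(1+\cdots)$ has $\log\gamma(d)\sim-(\log d)/d<0$, which contradicts $\gamma(d)>1$. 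The resolution is that the $O(\log d)$ correction is not negligible after dividing by $d^{-d/(d-1)}$: more carefully, $d\sinh\xi_d=d\xi_d+O(d\xi_d^3)=\log d+O(1)$ is of the \emph{same order} as a genuine correction, and one should instead write $\sinh(d\xi_d)+d\sinh(\xi_d)=d\cosh\xi_d\cdot\tanh(d\xi_d)+d\sinh\xi_d-(\text{lower order})$, or more simply use $\sinh(d\xi_d)=\cosh(d\xi_d)\tanh(d\xi_d)$ and $\cosh(d\xi_d)=d\cosh\xi_d$ to get
\[
\gamma(d)=d^{-d/(d-1)}\cdot d\bigl(\cosh(\xi_d)\tanh(d\xi_d)+\sinh(\xi_d)\bigr)
=d^{-1/(d-1)}\bigl(\cosh\xi_d\tanh(d\xi_d)+\sinh\xi_d\bigr).
\]
The bracket is $\tanh(d\xi_d)+O((\log d)/d)=1-2e^{-2d\xi_d}+O((\log d)/d)=1+O(1/d^2)+O((\log d)/d)$, giving $\gamma(d)=d^{-1/(d-1)}(1+O((\log d)/d))$; exponentiating the logarithm then yields $\gamma(d)=\exp(-(\log d)/d+O((\log d)/d^2)+O(\log(1+O((\log d)/d))))$.

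\textbf{Main obstacle.} The delicate point, and the one I expect to consume most of the work, is the bookkeeping of the two competing corrections — the $O((\log d)^2/d^2)$ from approximating the root $\xi_d$ and the apparently larger $O((\log d)/d)$ coming from $d\sinh\xi_d$ and from $\tanh(d\xi_d)$ versus $1$ — and reconciling them with the clean target exponent $1/d+O((\log d)^2/d^2)$. I suspect the correct route is to expand $\gamma(d)$ as $2^{1/d}$ times an explicit factor and verify that the logarithm of that factor really is $O((\log d)^2/d^2)$ and not merely $O((\log d)/d)$; this will require keeping the next term in the expansion of $\xi_d$, i.e.\ solving $e^{d\xi_d}=2d\,(1+O((\log d)^2/d^2))$ to the stated precision and tracking how $d\,\xi_d=\log(2d)+O((\log d)^2/d^2)$ propagates through $d^{-d/(d-1)}$ (whose logarithm is $-\log d-\tfrac{\log d}{d-1}$), so that the $-\log d$ cancels the $\log(2d)$ up to the advertised error. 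Once the cancellation $-\log d+\log(2d)=\log 2$ is isolated, the remaining terms are all $O((\log d)^2/d^2)$ and the claim \eqref{E:asymp} follows.
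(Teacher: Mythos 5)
Your overall strategy is the same as the paper's: extract a two-term asymptotic for $\xi_d$ from the defining equation and substitute it into \eqref{E:gamma}. You do correctly reach the key refined estimate $e^{d\xi_d}=2d\bigl(1+O((\log d)^2/d^2)\bigr)$, equivalently $d\xi_d=\log(2d)+O((\log d)^2/d^2)$, which is exactly what the paper uses. (One small point: your upper bound $\cosh\xi_d\le e^{\xi_d}\le e$ presupposes $\xi_d\le1$, which needs a one-line justification --- for instance, for $\xi\ge1$ and $d\ge3$ one has $\cosh(d\xi)/\cosh(\xi)\ge\frac12e^{(d-1)\xi}>d$; the paper instead gets the boundedness of $\cosh\xi_d$ from the unimodality of $(\cosh x)/x$.)

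The genuine gap is that neither of your two concluding computations actually proves \eqref{E:asymp}: both discard the contribution of $d\sinh\xi_d$ into an error term ($O(\log d)$ absolutely in the first attempt, $O((\log d)/d)$ relatively in the $\tanh$ reformulation), so they terminate at $\gamma(d)=d^{-1/(d-1)}\bigl(1+O((\log d)/d)\bigr)$, i.e.\ $\log\gamma(d)=O((\log d)/d)$, which does not isolate the main term $(\log2)/d$ at all. You diagnose this correctly in your final paragraph, but the fix you describe there is the missing computation, not a detail: one must keep the \emph{value} of this term, $d\sinh\xi_d=d\xi_d+O(d\xi_d^3)=\log(2d)+O((\log d)^3/d^2)$, so that the bracket in \eqref{E:gamma} becomes $d+\log(2d)+O((\log d)^2/d)$; then
\begin{align*}
\log\gamma(d)&=\log d+\frac{\log(2d)}{d}+O\Bigl(\frac{(\log d)^2}{d^2}\Bigr)
-\Bigl(\log d+\frac{\log d}{d}+O\Bigl(\frac{\log d}{d^2}\Bigr)\Bigr)\\
&=\frac{\log 2}{d}+O\Bigl(\frac{(\log d)^2}{d^2}\Bigr),
\end{align*}
the cancellation $\frac{\log(2d)}{d}-\frac{\log d}{d}=\frac{\log2}{d}$ being exactly the step your write-up stops short of. With that display added (it is the paper's own final computation, and your $\tanh$ identity would serve equally well once $\sinh\xi_d$ is evaluated rather than bounded), your argument is complete.
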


There is no need to suppose that $d$ is an integer here.

\begin{proof}
We begin by deriving an asymptotic formula for $\xi_d$
as $d\to\infty$. 
On the one hand, since 
\[
e^{d\xi_d}\ge\cosh(d\xi_d)=d\cosh(\xi_d)\ge d,
\]
we certainly have $\xi_d\ge (\log d)/d$. 
On the other hand,
since the unimodal function $(\cosh x)/x$ 
takes the same values at $\xi_d$ and $d\xi_d$, 
we must have $\xi_d\le\eta \le d\xi_d$,
where $\eta$ is the point at which 
$(\cosh x)/x$ assumes its minimum. Thus
\[
\frac{e^{d\xi_d}}{2}\le\cosh (d\xi_d)=d\cosh\xi_d\le d\cosh\eta,
\]
whence 
\[
\xi_d= \frac{\log d}{d}+O\Bigl(\frac{1}{d}\Bigr).
\]
This is not yet precise enough. 
Substituting into the equation $\cosh(d\xi_d)=d\cosh(\xi_d)$, 
we obtain
\[
\frac{e^{d\xi_d}}{2}+O\Bigl(\frac{1}{d}\Bigr)\
=d+O\Bigl(\frac{(\log d)^2}{d}\Bigr),
\]
whence
\[
\xi_d=\frac{\log(2d)}{d}+O\Bigl(\frac{(\log d)^2}{d^3}\Bigr).
\]
This is good enough for our needs. 

We now estimate $\gamma(d)$ as $d\to\infty$. 
First of all, we have
\[
d\sinh(\xi_d)=d\xi_d+O(d\xi_d^3)
=\log(2d)+O\Bigl(\frac{(\log d)^3}{d^2}\Bigr).
\]
Also
\[
\sinh(d\xi_d)
=\sinh\Bigl(\log(2d)+O\Bigl(\frac{(\log d)^2}{d^2}\Bigr)\Bigr)
=d+O\Bigl(\frac{(\log d)^2}{d}\Bigr).
\]
Hence
\begin{align*}
\log\gamma(d)
&=\log\Bigl(d\sinh(d\xi_d)+\sinh(d\xi_d)\Bigr)-\frac{d}{d-1}\log d\\
&=\log\Bigl(d+\log(2d)+O\Bigl(\frac{(\log d)^2}{d}\Bigr)\Bigr)-\Bigl(1+\frac{1}{d}
  +O\Bigl(\frac{1}{d^2}\Bigr)\Bigr)\log d\\
&=\log d+\frac{\log(2d)}{d}
  +O\Bigl(\frac{(\log d)^2}{d^2}\Bigr)-\Bigl(\log d+\frac{\log d}{d}
  +O\Bigl(\frac{\log d}{d^2}\Bigr)\Bigr)\\
&=\frac{\log 2}{d}+O\Bigl(\frac{(\log d)^2}{d^2}\Bigr).
\end{align*}
Finally, taking exponentials of both sides, we get \eqref{E:asymp}.
\end{proof}

\begin{acknowledgements}
The first author thanks the organizers of the Conference on Modern Aspects of Complex Geometry, 
held at the University of Cincinnati in honor of Taft Professor David Minda, 
for their kind hospitality and financial support.
\end{acknowledgements}

\bibliographystyle{spmpsci}   
\bibliography{biblist}

\begin{thebibliography}{1}
\providecommand{\url}[1]{{#1}}
\providecommand{\urlprefix}{URL }
\expandafter\ifx\csname urlstyle\endcsname\relax
  \providecommand{\doi}[1]{DOI~\discretionary{}{}{}#1}\else
  \providecommand{\doi}{DOI~\discretionary{}{}{}\begingroup
  \urlstyle{rm}\Url}\fi

\bibitem{LS96}
Lau, E., Schleicher, D.: Symmetries of fractals revisited.
\newblock Math. Intelligencer \textbf{18}(1), 45--51 (1996)

\bibitem{PRR16}
Paris{\'e}, P.O., Ransford, T., Rochon, D.: Tricomplex dynamical systems
  generated by polynomials of odd degree.
\newblock Preprint  (2016)

\bibitem{PR15a}
Paris{\'e}, P.O., Rochon, D.: A study of dynamics of the tricomplex polynomial
  {$\eta^p+c$}.
\newblock Nonlinear Dynam. \textbf{82}(1-2), 157--171 (2015)

\bibitem{PR15b}
Paris{\'e}, P.O., Rochon, D.: Tricomplex dynamical systems generated by
  polynomials of odd degree.
\newblock Preprint  (2015)

\bibitem{Sch04}
Schleicher, D.: On fibers and local connectivity of {M}andelbrot and
  {M}ultibrot sets.
\newblock In: Fractal geometry and applications: a jubilee of {B}eno\^\i t
  {M}andelbrot. {P}art 1, \emph{Proc. Sympos. Pure Math.}, vol.~72, pp.
  477--517. Amer. Math. Soc., Providence, RI (2004)

\end{thebibliography}

\end{document}